\theoremstyle{plain}
\newtheorem{theorem}{Theorem}[section]
\newtheorem*{theorem*}{Theorem}
\newtheorem{proposition}[theorem]{Proposition}
\newtheorem{lemma}[theorem]{Lemma}
\newtheorem{corollary}[theorem]{Corollary}
\theoremstyle{definition}
\newtheorem{definition}[theorem]{Definition}
\theoremstyle{remark}
\newtheorem{remark}[theorem]{Remark}
\newcommand{\chara}{\mathrm{char}}
\newcommand{\Cen}{\mathrm{Cen}}
\newcommand{\MR}{\mathrm{MR}}
\newcommand{\Mdeg}{\mathrm{MD}}
\newcommand{\fix}{\mathrm{fix}}
\renewcommand{\phi}{\varphi}
\begin{document}

\title{On the geometry of sharply 2-transitive groups}
\author{Tim Clausen \quad and \quad Katrin Tent\footnote{Both authors partially supported by the Deutsche Forschungsgemeinschaft (DFG, German Research Foundation) under Germany’s Excellence Strategy EXC 2044–390685587, Mathematics M\"unster: Dynamics-Geometry-Structure. }}

\maketitle

\begin{abstract}
We show that the geometry associated to certain non-split sharply 2-transitive groups does not contain a proper projective plane. For a sharply 2-transitive group of finite Morley rank we improve known rank inequalities for this geometry and conclude that a sharply 2-transitive group of Morley rank 6  must be of the form $K\rtimes K^*$ for some algebraically closed field $K$.
\end{abstract}

\section{Introduction}\label{sec:introduction}
  The standard example of a sharply 2-transitive group is of the form $\mathrm{AGL}_1(K)\cong K\rtimes K^*$ where $K$ is a field or more generally a near-field. A sharply 2-transitive group is called split if it is of this form, i.e. if it has a nontrivial abelian normal subgroup. By results of Zassenhaus and Jordan every finite sharply 2-transitive group is in fact split.   Recently the first examples of non-split infinite sharply 2-transitive groups were constructed by Rips, Segev, and Tent  in characteristic two \cite{rips-segev-tent}, and by Rips and Tent in characteristic zero  \cite{rips-tent}. However, these groups are not of finite Morley rank (see Section~\ref{sec:finiteMR} below). It is not known if non-split sharply 2-transitive groups of finite Morley rank exist. By the Algebraicity Conjecture by Cherlin and Zil'ber, an infinite simple group of finite Morley rank should be an algebraic group over an algebraically closed field. By \cite{epstein-nesin} this would imply that Frobenius groups of finite Morley rank split. Therefore sharply 2-transitive groups of finite Morley rank can be considered a test case for this conjecture. 

Recent results by Alt\i nel, Berkman, and Wagner \cite{altinel-berkman-wagner} show that any infinite sharply 2-transitive group of finite Morley rank and characteristic $2$ is split and that any infinite split sharply 2-transitive group of finite Morley rank of characteristic different from $2$ is of the form $\mathrm{AGL}_1(K)$ for some algebraically closed field $K$.

If $G$ is an infinite non-split sharply 2-transitive group of finite Morley rank such that $\chara(G) \neq 2$, then $G$ admits a point-line geometry on the set of its involutions which has been studied by Borovik and Nesin in Section 11.4. of \cite{borovik-nesin}. We show that $G$ must be simple if the lines in this geometry are strongly minimal. Moreover, we use this geometry to prove new rank inequalities for $G$. In particular, if $\MR(G) = 6$ then $G$ is split and hence of the form $\mathrm{AGL}_1(K)$ for an algebraically closed field $K$ of Morley rank 3.

Our geometric arguments are similar to those used by Fr\'econ in \cite{frecon} to show that there is no bad group of Morley rank 3.  

\section{Preliminaries}\label{sec:prelim}
A permutation group $G$ acting on a set $X$, $|X| \geq 2$, is called \emph{sharply 2-transitive} if it acts regularly on pairs of distinct points, or, equivalently, if $G$ acts transitively on $X$ and for each $x\in X$ the point stabilizer $G_x$ acts regularly on $X \setminus \{x\}$. For two distinct elements $x,y \in X$ the unique $g \in G$ such that $(x,y)^g = (y,x)$ is an involution. Hence the set $J$ of involutions in $G$ is non-empty and forms a conjugacy class.
We put $J^2=\{ij\colon i, j\in J\}$ and call the elements of $J^2$ \emph{translations} extending the terminology used in the standard examples of sharply 2-transitive groups.

The (permutation) characteristic of a group $G$ acting  sharply 2-transitively on a set $X$ is defined as follows: put $\chara(G) = 2$ if and only if involutions have no fixed points. If involutions have a (necessarily unique) fixed point,  the $G$-equivariant bijection $i \mapsto \fix(i)$ allows us to identify the given action of  $G$ on $X$ with the conjugation action of $G$ on $ J$. Thus in this case, the nontrivial translations also form a single conjugacy class.  We put $\chara(G)=p$ (or $0$) if  translations have order $p$ (or infinite order, respectively).
For standard examples this definition of characteristic agrees with the characteristic of the field.

The following are well-known properties of sharply 2-transitive groups:
\begin{remark}\label{lem:basic} Let $G$ be a sharply 2-transitive group of characteristic $\chara(G) \neq 2$.
\begin{itemize}
  \item [(a)] $\Cen(i)$ acts regularly on $J\setminus \{i\}$,
  \item [(b)] the set $J$ acts regularly on $J$, i.e. for any two involutions $i,j\in J$ there is a unique involution $k\in J$ such that $i^k=j$, and
  \item [(c)] $J^2 \cap \Cen(i) = \{1\}$ for all $i \in J$.
\end{itemize}
In particular, a nontrivial translation does not have a fixed point.
\end{remark}

The crucial criterion for the splitting of a sharply 2-transitive group is the following~\cite{neumann}:

\begin{theorem}\label{thm:neumann}
  A sharply 2-transitive group $G$ splits if and only if the set of translations $J^2$ is a subgroup of $G$ (and in that case, $J^2$ must in fact be abelian).
\end{theorem}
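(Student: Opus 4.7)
The plan is to prove both implications, working in characteristic different from $2$ for concreteness (the case $\chara(G) = 2$ requires parallel but distinct arguments).

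For the forward implication, assume $G = N \rtimes H$ with $N$ abelian normal, and identify $H$ with the point stabilizer $G_{x_0}$. Since $G_{x_0}$ acts sharply transitively on $X \setminus \{x_0\}$, and hence on $N \setminus \{1\}$ via the orbit map, for each $n \in N \setminus \{1\}$ there is a unique $h \in G_{x_0}$ with $n^h = n^{-1}$; from $n^{h^2} = n$ and regularity of the action, one deduces $h^2 = 1$, so $h$ is an involution and therefore equals the unique involution $i_0 \in G_{x_0}$. Thus $i_0$ inverts $N$, every $n i_0$ squares to $1$, and by cardinality $J = N \cdot i_0$. Then $J \cdot J = \{(n_1 i_0)(n_2 i_0) : n_1, n_2 \in N\} = \{n_1 n_2^{-1}\} = N$, which is indeed an abelian subgroup.

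For the backward implication, assume $J^2$ is a subgroup. First I would show it is a regular normal subgroup of $G$. Normality is immediate because $J$ is a single conjugacy class, so $J^2$ is closed under conjugation. The action of $J^2$ on $X$ is free by Remark (c), since under the bijection $\fix$ the stabilizer $G_{x_0}$ is identified with $\Cen(i_0)$ and $J^2 \cap \Cen(i_0) = \{1\}$. Transitivity follows from primitivity of the $2$-transitive action: orbits of the normal subgroup $J^2$ form a $G$-invariant partition, hence are either singletons or all of $X$, and singletons are excluded by the last sentence of the Remark. So $J^2$ acts regularly and $G = J^2 \rtimes G_{x_0}$ splits.

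It remains to show $J^2$ is abelian. Any involution $i \in J$ decomposes uniquely as $t h$ with $t \in J^2$, $h \in G_{x_0}$; squaring gives $h^2 = (t \cdot t^{h^{-1}})^{-1} \in J^2 \cap G_{x_0} = \{1\}$. Moreover $J^2$ contains no involution (otherwise such an involution $t$ would satisfy $t \in J^2 \cap \Cen(t) = \{1\}$ by Remark (c)), so $h \neq 1$, and uniqueness of the involution in $G_{x_0}$ forces $h = i_0$. Hence $J = J^2 \cdot i_0$, and $(t i_0)^2 = 1$ rearranges to $t^{i_0} = t^{-1}$ for every $t \in J^2$. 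The conjugation action of $i_0$ is therefore an automorphism of $J^2$ inverting every element, so for $s, t \in J^2$, comparing $(st)^{i_0} = s^{-1} t^{-1}$ with $(st)^{-1} = t^{-1} s^{-1}$ yields $st = ts$. The main obstacle is this final abelianness step: a normal regular subgroup of a general $2$-transitive group need not be abelian (this is precisely the near-field phenomenon), so one must crucially exploit the specific sharp $2$-transitive structure, namely the identification $J = J^2 \cdot i_0$, to exhibit $i_0$ as a fixed-point-free inverting automorphism of $J^2$.
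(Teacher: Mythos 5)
The paper itself gives no proof of this statement (it is quoted from Neumann with a citation), so there is no internal argument to compare against; I am judging your proposal on its own terms. Your forward direction is essentially sound: the appeal to ``cardinality'' for $J = N\cdot i_0$ is not valid for infinite sets, but it is also unnecessary, since both inclusions are available from what you actually prove ($J \subseteq N i_0$ from decomposing an involution as in your backward direction, and $N i_0 \subseteq J$ because you have shown that $i_0$ inverts $N$). The backward direction, however, has a genuine gap at exactly the step you identify as the crux. From the unique decomposition $i = t i_0$ of an involution you obtain only the inclusion $J \subseteq J^2 i_0$, i.e.\ that every involution has the form $t i_0$ for \emph{some} $t \in J^2$. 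To run the computation ``$(t i_0)^2 = 1$ rearranges to $t^{i_0}=t^{-1}$ for every $t \in J^2$'' you need the reverse inclusion $J^2 i_0 \subseteq J$, equivalently $J^2 = J i_0$, equivalently that $i_0$ inverts all of $J^2$ --- which is precisely what is being derived. For infinite $G$ one cannot pass from $J \subseteq J^2 i_0$ to equality by counting, so as written your argument only shows that $i_0$ inverts the a priori proper subset $J i_0 \subseteq J^2$, and the conclusion $st = ts$ for arbitrary $s,t \in J^2$ does not follow.

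The gap is repairable from the regularity you have already established. Given $t \in J^2 \setminus \{1\}$, freeness gives $x_0^t \neq x_0$, so there is an involution $j$ interchanging $x_0$ and $x_0^t$. Then $tj$ fixes $x_0$, and $(tj)^2 = t\, t^{j} \in J^2 \cap G_{x_0} = \{1\}$ by normality of $J^2$ and freeness; since $J^2$ contains no involution we get $tj \neq 1$, hence $tj$ is the unique involution of $G_{x_0}$, i.e.\ $tj = i_0$, so $t = i_0 j$ and $t^{i_0} = j i_0 = t^{-1}$. With this inserted, your final comparison of $(st)^{i_0} = s^{-1}t^{-1}$ with $(st)^{-1} = t^{-1}s^{-1}$ does yield commutativity. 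Separately, note that the theorem is stated for all characteristics and your argument is confined to $\chara(G) \neq 2$; that case is not merely ``parallel'', since in characteristic $2$ involutions have no fixed points, $X$ cannot be identified with $J$, point stabilizers contain no involutions, and the element $i_0$ around which your entire argument revolves does not exist.
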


Since we aim to show that sharply 2-transitive groups of finite Morley rank are necessarily split, we  will be focusing on the failure of $J^2$ being a subgroup.

\section{A point-line geometry}\label{sec:geometry}

Let $G$ be a  sharply 2-transitive group of characteristic $\chara(G) \neq 2$ and let $J$ be the set of involutions. If commuting is transitive on the set of non-trivial translations in $J^2$, there is a well-behaved point-line geometry  first defined by Schr\"oder \cite{schroeder} following ideas of  Bachmann \cite{bachmann}. We follow the construction in Section 11 of \cite{borovik-nesin} although we explicitely define lines as subsets of $J$. 

\begin{lemma}\label{lem:geometry} If $\chara(G)\neq 2$, the following conditions are  equivalent:
\begin{enumerate}
\item  [(a)] Commuting is transitive on $J^2 \setminus \{1\}$.
\item  [(b)]  $iJ \cap kJ$ is uniquely 2-divisible for all involutions $i\neq k\in J$.
\item  [(c)]   $\Cen(ik) = iJ \cap kJ$ is abelian and is inverted by $k$ for all $i\neq k\in J$. 
\item  [(d)] The set $\{\Cen(\sigma) \setminus \{1\} : \sigma \in J^2 \setminus \{ 1 \} \}$ forms
a partition of $J^2 \setminus \{1\}$.
\end{enumerate}  
 
\end{lemma}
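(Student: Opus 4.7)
The plan is to prove the cycle $(c) \Rightarrow (d) \Rightarrow (a) \Rightarrow (c)$, and then deduce $(a) \Leftrightarrow (b)$ via $(c)$. Two preliminary computations will be used throughout. First, for $g \neq 1$, $g \in iJ$ iff $igi = g^{-1}$, so any $g \in iJ \cap kJ$ satisfies $igi = g^{-1} = kgk$, which rearranges to $g \in \Cen(ik)$; this gives $iJ \cap kJ \subseteq \Cen(ik)$ and shows that $k$ inverts every element of $iJ \cap kJ$. Second, for a nontrivial translation $\sigma$, every $g \in \Cen(\sigma) \setminus \{1\}$ is fixed-point-free, since a fixed point $x$ of $g$ would force $g$ to fix the full $\langle \sigma \rangle$-orbit of $x$, contradicting sharp $2$-transitivity; in particular $J \cap \Cen(\sigma) = \emptyset$ and $\Cen(\sigma)$ is torsion-free.

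For $(c) \Rightarrow (d)$: if $g \in \Cen(\sigma) \cap \Cen(\tau) \setminus \{1\}$ with $\sigma, \tau \in J^2 \setminus \{1\}$, then $g \in J^2$ by $(c)$, abelianness of $\Cen(\sigma)$ gives $\Cen(\sigma) \subseteq \Cen(g)$, and applying $(c)$ to $g$ makes $\Cen(g)$ abelian containing $\sigma$, so $\Cen(g) \subseteq \Cen(\sigma)$. Hence $\Cen(\sigma) = \Cen(g) = \Cen(\tau)$, yielding the partition. The implication $(d) \Rightarrow (a)$ is immediate: two translations commuting with a common $\sigma_2 \in J^2 \setminus \{1\}$ lie in its block, which by the partition is also the block of each of them, and so they commute.

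The substantive direction is $(a) \Rightarrow (c)$. Abelianness of $iJ \cap kJ$ is immediate from $(a)$: any two nontrivial elements are translations commuting with $\sigma = ik$ and so commute. The hard part, and the main obstacle of the whole proof, is the reverse inclusion $\Cen(\sigma) \subseteq iJ \cap kJ$ --- equivalently, every $g \in \Cen(\sigma) \setminus \{1\}$ must be a translation inverted by $i$ --- because $(a)$ only constrains commuting among translations and says nothing a priori about non-translation centralizers. My plan is to show first that $i \tau i \in \Cen(\sigma) \cap (J^2 \setminus \{1\})$ for every $\tau \in \Cen(\sigma) \cap (J^2 \setminus \{1\})$ (using $i \sigma i = \sigma^{-1}$ together with $\tau \sigma = \sigma \tau$), apply $(a)$ to make $\tau$ commute with $i \tau i$ --- which unfolds to $\tau i \tau \in \Cen(i)$ --- and then iterate, combining with the fixed-point-free property of $\Cen(\sigma) \setminus \{1\}$, to identify the commuting class of $\sigma$ in $J^2$ as exactly $(iJ \cap kJ) \setminus \{1\}$ and preclude $\Cen(\sigma)$ having any element outside $J^2 \cup \{1\}$.

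For $(a) \Leftrightarrow (b)$: assuming $(a)$ and hence $(c)$, given $g = ij \in iJ \cap kJ$ with $g \neq 1$, Remark \ref{lem:basic}(b) supplies a unique involution $m$ with $mim = j$, and $h := im$ satisfies $h^2 = i(mim) = ij = g$. Since $h^2 \neq 1$ and $J \cap J^2 = \emptyset$, we get $h \in J^2 \setminus \{1\}$; and as $h$ commutes with $g$ and $g$ commutes with $\sigma$, $(a)$ forces $h \sigma = \sigma h$, so $h \in iJ \cap \Cen(\sigma) = iJ \cap kJ$ by $(c)$. Uniqueness is immediate from the torsion-freeness of $\Cen(\sigma)$. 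For the converse $(b) \Rightarrow (a)$, unique $2$-divisibility of $iJ \cap kJ$ together with the preliminary inclusion $iJ \cap kJ \subseteq \Cen(ik)$ can be leveraged to upgrade $iJ \cap kJ$ to an abelian subgroup coinciding with $\Cen(ik)$, giving $(c)$ and hence $(a)$.
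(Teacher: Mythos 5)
Your cycle $(c)\Rightarrow(d)\Rightarrow(a)$ is correct, and so are most of your preliminary observations, modulo two small slips: the equivalence ``$g\in iJ$ iff $igi=g^{-1}$'' fails at $g=i$ (harmless for the inclusion $iJ\cap kJ\subseteq\Cen(ik)$, which only uses one direction), and $\Cen(\sigma)$ is \emph{not} torsion-free in characteristic $p\neq 0,2$ --- it contains $\sigma$ itself, of order $p$; what you need and actually have is only that $\Cen(\sigma)$ contains no involutions, so that squaring is injective on an abelian subgroup of it. The real problem is that the two load-bearing implications are not proved. For $(a)\Rightarrow(c)$ you correctly identify the inclusion $\Cen(\sigma)\subseteq iJ\cap kJ$ --- equivalently, that every nontrivial element of $\Cen(\sigma)$ is a translation --- as the main obstacle, but what you offer is only a plan (``iterate\dots to identify\dots and preclude\dots''). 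The step you do describe yields $\tau\tau^i\in\Cen(i)$ for translations $\tau\in\Cen(\sigma)$, which says nothing about elements of $\Cen(\sigma)$ that are not a priori in $J^2$; hypothesis (a) only constrains commutation among translations, and no iteration of such relations by itself rules out a non-translation centralizing $\sigma$. Likewise your $(b)\Rightarrow(a)$ is the bare assertion that unique 2-divisibility ``can be leveraged'' to give $(c)$; that is precisely Lemma 11.50(iv) of Borovik--Nesin, a nontrivial argument which the paper cites rather than reproves. Since your $(a)\Leftrightarrow(b)$ is routed through $(c)$, the whole equivalence rests on these two unproved steps.

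For comparison, the paper avoids proving $(a)\Rightarrow(c)$ directly. Its one substantive argument is $(a)\Rightarrow(b)$: for $\tau\in iJ\cap kJ$ it works inside the abelian subgroup $A=\langle\Cen(\tau)\cap J^2\rangle$ (abelian by (a)), on which squaring is an injective endomorphism because $A\cap J=\emptyset$; since squaring is a bijection of $iJ$ and of $kJ$, the square roots $\sigma_i\in iJ$ and $\sigma_k\in kJ$ of $\tau$ both lie in $A$ and must coincide, giving a square root in $iJ\cap kJ$. This sidesteps entirely the question of whether $\Cen(\sigma)$ consists of translations, which is deferred to the cited $(b)\Rightarrow(c)$. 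To repair your proposal you would either need to adopt this route or actually supply the Borovik--Nesin argument for $(b)\Rightarrow(c)$; as written there is a genuine gap at exactly the point you flagged as the main obstacle.
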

\begin{proof}
(a) $\Rightarrow$ (b):   Note that  since $(ij)^2=ii^j\in iJ$ every element of $iJ$ has a unique square-root in $iJ$.  Let $\tau \in iJ \cap kJ$. By assumption the group $A = \langle \Cen(\tau) \cap J^2 \rangle \leq \Cen(\tau)$ is abelian. Moreover, $A \cap J = \emptyset$ by Remark~\ref{lem:basic}. Hence the square-map is an injective group homomorphism from $A$ to $A$.
  
  There is $\sigma_i \in iJ$ such that $\sigma_i^2 = \tau$ and therefore $\sigma_i \in \Cen(\tau) \cap iJ$ because commuting is transitive. Similarly we  find $\sigma_k \in \Cen(\tau) \cap kJ$ such that $\sigma_k^2 = \tau$. Since the square-map is injective, it follows that $\sigma_i=\sigma_k \in iJ \cap kJ$. Therefore $iJ \cap kJ$ is uniquely 2-divisible. 

(b) $\Rightarrow$ (c) is contained in  Lemma 11.50 iv of \cite{borovik-nesin}.

(c) $\Rightarrow$ (d)  and  (d) $\Rightarrow$ (a) are obvious.
\end{proof}

Clearly, these conditions are satisfied in split sharply 2-transitive groups by Theorem~\ref{thm:neumann}.  Furthermore, by Lemma 11.50 of \cite{borovik-nesin}, these conditions are automatically satisfied whenever $\chara(G)=p\neq 0, 2$ or in case $G$ satisfies the descending chain condition for centralizers and hence in particular if  $G$ has finite Morley rank. On the other hand, the examples constructed in \cite{rips-segev-tent} (see also \cite{TentZiegler}) show that in characteristic 2 these conditions need not be satisfied. The non-split examples in characteristic 0 constructed in \cite{rips-tent} satisfy the assumptions and it is an open question whether non-split sharply 2-transitive groups exist in characteristic 0 which fail to satisfy these conditions.

If any of the conditions of Lemma~\ref{lem:geometry} is satisfied,  we obtain a point-line geometry as follows: the points of this geometry are the involutions of $G$. Given two points $i \neq j\in J$, we set
\[ \ell_{ij} = \{ k \in J : ij \in kJ \}\]
to be the (unique) line containing $i$ and $j$. 
By Lemma~\ref{lem:geometry}  we then have
 \[ \ell_{ij} = \{ k \in J : ij \in kJ \} = i\ \Cen(ij)=\{k\in J: (ij)^k=ji\}. \]

This implies that the point-line geometry is a partial projective plane: more precisely,  any two points are contained in a unique line and (hence) any two lines intersect in at most one point.

\begin{lemma} \label{line_lemma} Assume that  $G$ is sharply 2-transitive, $\chara(G)\neq 2$ and assume that commuting is transitive on $J^2\setminus\{1\}$.
	Let $\lambda$ be a line.
	\begin{enumerate}
		\item[(a)] Suppose $\lambda^i = \lambda^j$ for involutions $i \neq j$. Then $i,j \in \lambda$.
		\item[(b)] Suppose $\lambda \cap \lambda^i \neq \emptyset$ for some involution $i$. Then $i \in \lambda$. 
	\end{enumerate}
\end{lemma}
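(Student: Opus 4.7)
\emph{Plan and proof of (b).} My approach is to prove (b) first by a short direct argument, and then reduce (a) to (b) together with a geometric case analysis on the position of the line $\nu := \ell_{ij}$ relative to $\lambda$. For (b), choose $k \in \lambda \cap \lambda^i$. Since $k \in \lambda^i$, we have $k^i \in \lambda$ as well. If $k = k^i$, then $k$ commutes with $i$; the regular action of $\Cen(i)$ on $J \setminus \{i\}$ (Remark~\ref{lem:basic}(a)) gives $J \cap \Cen(i) = \{i\}$, so $k = i \in \lambda$. If $k \neq k^i$, set $\sigma := k \cdot k^i$. Since $k$ and $k^i$ are distinct points on $\lambda$, $\sigma$ is a nontrivial element of $A_\lambda$, and $\sigma^i = k^i \cdot k = \sigma^{-1}$, so $i$ inverts $\sigma$. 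By the description of $\lambda$ established just before the lemma, the involutions inverting any nontrivial element of $A_\lambda = \Cen(\sigma)$ are exactly the points of $\lambda$; hence $i \in \lambda$.

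\emph{Part (a), reductions.} Set $\tau := ij \in J^2 \setminus \{1\}$, and observe that $\nu = \ell_{ij}$ satisfies $A_\nu = \Cen(\tau)$ and $\tau \in A_\nu$. From $\lambda^i = \lambda^j$ we get $\lambda^\tau = \lambda$, so $\tau$ normalizes $\lambda$. Since $\ell_{ij}$ is the unique line containing both $i$ and $j$, the conclusion $i, j \in \lambda$ is equivalent to $\nu = \lambda$. Assume for contradiction $\nu \neq \lambda$. If $i \in \lambda$, then $\lambda = iA_\lambda$ is preserved by conjugation by $i$, so $\lambda^i = \lambda = \lambda^j$, and (b) yields $j \in \lambda$; whence $\nu = \ell_{ij} = \lambda$, contradicting the assumption. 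By symmetry $i, j \notin \lambda$, and (b) then gives that $\mu := \lambda^i$ is disjoint from $\lambda$.

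\emph{Part (a), the case $\lambda \cap \nu \neq \emptyset$.} Suppose $k_0 \in \lambda \cap \nu$, necessarily distinct from $i$ and $j$. Since $i \in \nu$, $i$ inverts $A_\nu$ and $\nu^i = \nu$, so $k_0^i \in \nu$; it is also in $\lambda^i = \mu$, hence $k_0^i \in \mu \cap \nu$. The same reasoning yields $k_0^j \in \mu \cap \nu$. These two points are distinct: otherwise $k_0 \in \Cen(\tau) \cap J$, which is empty by Remark~\ref{lem:basic}(c). Hence $\mu \cap \nu$ contains two distinct points, forcing $\mu = \nu$; but then $\lambda = \mu^i = \nu^i = \nu$, contradicting $\nu \neq \lambda$.

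\emph{Part (a), the main obstacle.} The remaining case $\lambda \cap \nu = \emptyset$ yields three pairwise disjoint lines $\lambda, \mu, \nu$ and is the delicate part of the argument. The plan is to analyze the action of $\tau$ on $A_\lambda$: since $\tau$ normalizes $A_\lambda$, the map $\phi : A_\lambda \to A_\lambda$, $c \mapsto c^{-1} c^\tau$, is a group homomorphism whose kernel $A_\lambda \cap \Cen(\tau) = A_\lambda \cap A_\nu = \{1\}$ is trivial by the partition property in Lemma~\ref{lem:geometry}(d), so $\phi$ is injective. The map $a : \lambda \to A_\lambda$, $k \mapsto k \cdot k^\tau$, never takes the value $1$ (a fixed point of $\tau$ on $\lambda$ would lie in $\Cen(\tau) \cap J = \emptyset$), and a direct computation gives $a(kc) = a(k) \cdot \phi(c)$ for $c \in A_\lambda$, so the image of $a$ is a coset of $\phi(A_\lambda)$ avoiding $\{1\}$. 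Surjectivity of $\phi$ (which holds for finite $A_\lambda$, and in the finite Morley rank setting of the paper via a rank count) would then contradict this and close the argument; extending the conclusion to the full generality of the lemma, using the unique $2$-divisibility of $A_\lambda$ from Lemma~\ref{lem:geometry}(b) together with the inversion of $A_\lambda$ by the points of $\lambda$, is the step where I expect the argument to require the most care.
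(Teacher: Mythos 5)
Your proof of part (b) is correct and complete: the observation that $k,k^i$ are two points of $\lambda$ and that $i$ inverts $\sigma=kk^i$, combined with the description $\ell_\sigma=\{m\in J:\sigma^m=\sigma^{-1}\}$ given just before the lemma, immediately yields $i\in\lambda$. This is a genuinely different and arguably cleaner route than the paper's, which treats the case $\lambda=\lambda^i$ via part (a) and otherwise uses uniqueness of the intersection point together with Remark~\ref{lem:basic}; your argument handles both cases uniformly and makes the logical order (b) before (a) legitimate.

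Part (a), however, has a genuine gap. Your reduction to $i,j\notin\lambda$ and the case $\lambda\cap\ell_{ij}\neq\emptyset$ are fine, but the remaining case $\lambda\cap\ell_{ij}=\emptyset$ is exactly where the content of the statement lives, and there you only have a conditional argument: writing $\lambda=\ell_\sigma$ and $\tau=ij$, you show that $\phi:\Cen(\sigma)\to\Cen(\sigma)$, $c\mapsto c^{-1}c^\tau$, is an injective endomorphism and that $k\mapsto kk^\tau$ maps $\lambda$ onto a coset of $\phi(\Cen(\sigma))$ avoiding $1$, so a contradiction requires $\phi$ to be \emph{surjective}. An injective endomorphism of an abelian, even uniquely $2$-divisible, group need not be surjective, and the lemma is stated for arbitrary sharply $2$-transitive groups with $\chara(G)\neq 2$ and commuting transitive on $J^2\setminus\{1\}$ --- no finiteness or Morley rank hypothesis is available. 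What you are missing is precisely the fact the paper imports from Proposition 11.51 of Borovik--Nesin: $N_G(\Cen(\sigma))=\Cen(\sigma)\rtimes N_{\Cen(s)}(\Cen(\sigma))$ is a \emph{split} sharply $2$-transitive group, whence $N_G(\Cen(\sigma))\cap J^2=\Cen(\sigma)$; applied to $\tau=ij\in N_G(\lambda)\leq N_G(\Cen(\sigma))$ this gives $ij\in\Cen(\sigma)$ and hence $i,j\in\ell_\sigma=\lambda$ with no case distinction at all. Your endomorphism $\phi$ is essentially the obstruction that this splitting result resolves, so either invoke it or supply an unconditional replacement; as written, the proof of (a) is incomplete.
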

\begin{proof}
Part (a) is contained in the proof of Theorem 11.71 in \cite{borovik-nesin}, part (b) is Lemma 11.59 in \cite{borovik-nesin}.
Since our definition of lines is slightly different from the one given in \cite{borovik-nesin}, we include proofs.

	(a) If $\lambda^i = \lambda^j$ then $ij \in N_G(\lambda)$ and hence $ij \in N_G(\lambda^2)$. Now $\lambda^2 = \Cen(\sigma)$ for some $\sigma \in J^2 \setminus \{ 1 \}$ such that $\lambda = 
	\ell_\sigma$. Fix $s \in \lambda$. The group $N_G(\Cen(\sigma)) = \Cen(\sigma) \rtimes N_{\Cen(s)}(\Cen(\sigma))$ is split sharply 2-transitive by Proposition 11.51 of \cite{borovik-nesin}.
	Hence
	\[ ij \in N_G(\Cen(\sigma)) \cap J^2 = \Cen(\sigma) \]
	and therefore $i,j \in \ell_\sigma = \lambda$.
	
	(b) We may assume $\lambda \neq \lambda^i$. Hence there must be a unique $j \in \lambda \cap \lambda^i$. But then $j$ is fixed by $i$ and by Lemma~\ref{lem:basic} (b) we have $i = j \in \lambda$.
\end{proof}

We first observe that the geometry associated to such a group $G$ does not contain a proper projective plane.

\begin{lemma} Let $G$ be as in Lemma~\ref{line_lemma} and 
let $H \subseteq J^2$ be a subgroup of $G$ which is uniquely 2-divisible and normalized by an involution $i \in J$. Then $H = \Cen(\sigma)$ for some $\sigma \in J^2 \setminus \{ 1 \}$.
\end{lemma}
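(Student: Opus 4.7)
The plan is to show that conjugation by $i$ inverts every element of $H$. This will make $H$ abelian, placing it inside a single block of the partition of $J^2\setminus\{1\}$ from Lemma~\ref{lem:geometry}(d); the equality with $\Cen(\sigma)$ is then recovered by identifying $iH$ as a full line $\ell_\sigma$.

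First I would check that the conjugation action of $i$ on $H$ is fixed-point-free: if $h\in H$ satisfies $h^i=h$, then $h$ commutes with $i$, hence $h\in\Cen(i)\cap J^2=\{1\}$ by Remark~\ref{lem:basic}(c) (which is where the hypothesis $H\subseteq J^2$ is used). The next and crucial step is to upgrade this to: $i$ inverts every element of $H$. This is an instance of the classical theorem that a fixed-point-free involutive automorphism of a uniquely 2-divisible group acts by inversion (and forces the group to be abelian). Concretely, one considers the map $\phi\colon H\to H$, $\phi(h)=h^{-1}h^i$, which satisfies $\phi(h)^i=\phi(h)^{-1}$ and is injective (since $\phi(h_1)=\phi(h_2)$ forces $h_2h_1^{-1}$ to be $i$-fixed, hence trivial). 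Unique 2-divisibility is used to establish surjectivity of $\phi$, from which one obtains $H=\{h\in H:h^i=h^{-1}\}$ and hence abelianness of $H$.

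With $H$ abelian and inverted by $i$, fix any $h_0\in H\setminus\{1\}$. Since $H$ is abelian, $H\subseteq\Cen(h_0)=\Cen(\sigma)$ with $\sigma=h_0$. Moreover, each $ih$ with $h\in H$ is an involution, since $(ih)^2=h^ih=h^{-1}h=1$, and a direct computation using abelianness shows $(ih')h(ih')^{-1}=h^{-1}$ for all $h,h'\in H$, so $iH\subseteq\ell_{h_0}=i\,\Cen(\sigma)$. The equality $H=\Cen(\sigma)$ will then follow by arguing that $iH$ already exhausts the line $\ell_\sigma$, using the geometric properties of lines from Lemma~\ref{line_lemma}.

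The main obstacle I expect is the inversion step. The fixed-point-free involutive automorphism theorem is essentially immediate for finite groups of odd order, where an injective endomorphism is automatically surjective. In the infinite uniquely 2-divisible setting one must explicitly exploit the square-root structure to construct preimages under $\phi$, and verifying surjectivity (or equivalently, that $i$ inverts every element of $H$) is the technical heart of the argument.
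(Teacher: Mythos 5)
Your main line of argument --- that $i$ acts on $H$ as a fixed-point-free involutory automorphism (via Remark~\ref{lem:basic}(c)), and that by Neumann's classical argument a uniquely 2-divisible group admitting such an automorphism is abelian and inverted by it, whence $H \leq \Cen(\sigma)$ for any $\sigma \in H \setminus \{1\}$ --- is exactly the paper's proof, which consists of precisely this reduction to \cite{neumann}. The technical point you single out (surjectivity of $h \mapsto h^{-1}h^i$, obtained from the existence of square roots) is indeed the heart of that classical argument.

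The genuine gap is your final step. You defer the upgrade from $H \leq \Cen(\sigma)$ to $H = \Cen(\sigma)$ to a future argument that ``$iH$ exhausts the line $\ell_\sigma$,'' but no such argument can exist: the equality is false as stated. In $G = \mathrm{AGL}_1(\Q) = \Q \rtimes \Q^*$ take $H = \Z[\tfrac{1}{2}] \leq \Q = J^2$; this is a uniquely 2-divisible subgroup normalized by the involution $i = (0,-1)$, yet it is a proper subgroup of $\Cen(\sigma) = \Q$ for every nontrivial translation $\sigma$, so $iH$ is a proper subset of the line. Nothing in Lemma~\ref{line_lemma} recovers the missing points. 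The paper's own one-line proof stops at the containment $H \leq \Cen(\sigma)$, and that containment is all that is used where the lemma is applied (in Proposition~\ref{prop:no proj plane} it is invoked in the form $X^2 \leq \Cen(\sigma)$); the equality in the statement should be read as, or corrected to, an inclusion. So your argument is complete and matches the paper up to and including the containment, and you should simply delete the attempted exhaustion step rather than try to fill it in.
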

\begin{proof}
	Since $H$ is uniquely 2-divisible and $i$ acts as an involutionary automorphism without fixed points, it follows as in \cite{neumann} that $H$ is abelian and hence must be contained in the centralizer of some translation.
\end{proof}

\begin{proposition}\label{prop:no proj plane}
Let $G$ be as in Lemma~\ref{line_lemma}.
  There is no proper projective plane $X \subseteq J$. I.e. if $X \subseteq J$ satisfies
  \begin{enumerate}
    \item[(a)] $\forall i \neq j \in X: \ell_{ij} \subseteq X$, and
    \item[(b)] if $\lambda$ and $\delta$ are lines contained in $X$ then $\lambda \cap \delta \neq \emptyset$,
  \end{enumerate}
  then $X$ contains at most one line.
\end{proposition}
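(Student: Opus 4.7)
The plan is to argue by contradiction, with Lemma~\ref{line_lemma}(b) providing the essentially only nontrivial input. Suppose $X$ contains two distinct lines $\lambda \neq \delta$. By (b) they meet, and since any two distinct lines of the geometry intersect in at most one point, $\lambda \cap \delta = \{p\}$ for a unique point $p \in X$. Pick an involution $i \in \lambda \setminus \{p\}$; this is possible because each line $\ell_\sigma = i\Cen(\sigma)$ has at least three points (as $\Cen(\sigma)$ is uniquely $2$-divisible and nontrivial). Then $i \in X$ but $i \notin \delta$, the latter because $\lambda \cap \delta = \{p\}$ and $i \neq p$.

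The key observation is that $X$ is closed under conjugation by $i$. For any $j \in X$, the line $\ell_{ij}$ is contained in $X$ by (a), and since $i \in \ell_{ij}$ the line is $i$-invariant, so $j^i \in \ell_{ij} \subseteq X$. In particular $\delta^i \subseteq X$ is again a line of $X$. Condition (b) then forces $\delta \cap \delta^i \neq \emptyset$, and Lemma~\ref{line_lemma}(b) gives $i \in \delta$, contradicting our choice of $i$. Hence $X$ can contain at most one line.

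The only step that requires any thought is the invariance $X^i = X$, which is immediate once one notices that every line through an involution $i$ is stable under conjugation by $i$. After that, the contradiction is a one-line consequence of Lemma~\ref{line_lemma}(b) applied to $\delta$ and $i$.
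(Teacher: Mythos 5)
Your proof is correct, but it takes a genuinely different route from the paper. The paper argues algebraically: for $\sigma,\tau\in X^2\setminus\{1\}$ it uses axiom (b) to produce a common point $i\in\ell_\sigma\cap\ell_\tau$, writes $\sigma=ai$, $\tau=ib$, and concludes that $X^2$ is closed under multiplication, hence a uniquely $2$-divisible subgroup normalized by each involution of $X$ acting without fixed points; by the preceding lemma (Neumann's argument) $X^2$ is abelian and contained in $\Cen(\sigma)$, so $X\subseteq\ell_\sigma$. You instead stay entirely inside the incidence geometry: from two distinct lines $\lambda,\delta\subseteq X$ meeting in a single point $p$, you pick $i\in\lambda\setminus\{p\}$, observe that $X$ is $i$-invariant because every line through $i$ is $i$-invariant (indeed $\ell_{ij}=i\,\Cen(ij)$ and $i$ inverts $ij$, hence normalizes $\Cen(ij)$), and then axiom (b) applied to $\delta$ and $\delta^i$ together with Lemma~\ref{line_lemma}(b) forces $i\in\delta$, a contradiction. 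All steps check out: the $i$-invariance argument is sound, the application of (b) to $\delta$ and $\delta^i$ is legitimate even when $\delta=\delta^i$, and Lemma~\ref{line_lemma}(b) covers that degenerate case. Your route is more elementary in that it bypasses the auxiliary lemma on uniquely $2$-divisible subgroups and the appeal to Neumann's splitting argument, using only Lemma~\ref{line_lemma}(b); the paper's route buys slightly more structural information, namely that $X^2$ sits inside the centralizer of a single translation and $X$ inside a single line, which is in the same spirit as the group-theoretic machinery used later in the finite Morley rank section.
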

\begin{proof}
  Suppose $X \subseteq J$ satisfies (a) and (b). Take $\sigma, \tau \in X^2 \setminus \{ 1 \}$ and let $i$ be a point in $\ell_\sigma \cap \ell_\tau$. We may write $\sigma = ai, \tau = ib$ for some $a \in \ell_\sigma, b \in \ell_\tau$. Then $\sigma\tau = ab \in X^2$. Therefore $X^2$ is closed under multiplication and thus must be a subgroup of $G$. Moreover, $X^2$ is uniquely 2-divisible since it is a union of centralizers of translations.
  
  Each $j \in X$ acts on $X^2$ as an involutionary automorphism without fixed points. By the previous lemma $X^2 \leq \Cen(\sigma)$ and hence $X \subseteq \ell_\sigma$.
\end{proof}

\section{Sharply 2-transitive groups of finite Morley rank}\label{sec:finiteMR}

Let $G$ be a sharply $2$-transitive group of finite Morley rank with $\chara(G) \neq 2$ and let $J$ denote the set of involutions in $G$.  By Lemma 11.50 of \cite{borovik-nesin}, $iJ \cap jJ$ is uniquely 2-divisible for all $i\neq j\in J$ and so we can use the point-line geometry introduced in the previous section.  
We set $n = \MR(J)$ and $k = \MR(\Cen(ij))$ for involutions $i \neq j$. Note that $k$ does not depend on the choice of $i$ and $j$.

Since $G$ acts sharply $2$-transitively on $J$, it is easy to see that $\MR(G) = 2n$ and $\MR(J^2) = 2n-k$. Moreover, $G$ and $\Cen(ij)$ have Morley degree 1 by Lemma 11.60 of~\cite{borovik-nesin}.  


\begin{proposition}\label{prop:subgroup generated by J^2} 
	\begin{enumerate}
		\item[(a)] The set $iJ$ is indecomposable for all $i \in J$.
		\item[(b)] $\langle J^2 \rangle$ is a definable connected subgroup. In particular, there is a bound $m$ such that any $g\in \langle J^2 \rangle$ is a product of at most $m$ translations.
		\item[(c)]   $J^2$ is not generic in $\langle J^2 \rangle$.
		\item[(d)]   $\MR(J^3) > \MR(J^2)$.
	\end{enumerate}
\end{proposition}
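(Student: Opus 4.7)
My plan handles the four parts in sequence, with (c) reduced to (d) and (d) as the main content. For (a), the decomposition
\[
iJ \;=\; \bigcup_{j \in J \setminus \{i\}} \Cen(ij)
\]
(which follows from $\Cen(ij) = iJ \cap jJ$ in Lemma~\ref{lem:geometry}(c) together with $ij \in \Cen(ij)$) expresses $iJ$ as a union of connected definable subgroups, connectedness coming from Lemma~11.60 of~\cite{borovik-nesin}. If $iJ$ meets only finitely many cosets of a definable subgroup $H \le G$, then so does each $\Cen(ij) \subseteq iJ$, so $\Cen(ij) \cap H$ has finite index in the connected group $\Cen(ij)$ and thus equals $\Cen(ij)$; hence $\Cen(ij) \subseteq H$ for every $j$, and $iJ \subseteq H$ lies in a single coset. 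For (b), the indecomposable sets $\{iJ\}_{i \in J}$ each contain $1 = i^2$, so Zilber's indecomposability theorem gives that $\langle J^2 \rangle = \langle \bigcup_i iJ \rangle$ is a definable connected subgroup with a uniform bound $m$ on product length. For (c), since $j_1 j_2 \in J^2 \subseteq \langle J^2 \rangle$ for all $j_1, j_2 \in J$, the set $J$ sits in a single coset of $\langle J^2 \rangle$, so $J^3 \subseteq j \langle J^2 \rangle$ has $\MR(J^3) \le \MR(\langle J^2 \rangle)$; combining with (d) yields $\MR(\langle J^2 \rangle) \ge \MR(J^3) > \MR(J^2)$.

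For (d), the main work, I argue by contradiction: suppose $\MR(J^3) = \MR(J^2)$ (in the non-split setting, where the claim is nontrivial). A fiber-rank analysis of the map $J \times J^2 \to J^3$, $(j, \sigma) \mapsto j\sigma$, with source rank $3n - k$ and target rank $2n - k$ giving generic fibers of rank $n$, shows that for generic pairs $(j_1, j_2) \in J \times J$ we have $\MR(j_1 J^2 \cap j_2 J^2) = \MR(J^2)$, and so generic $j_1 j_2 \in J^2$ lies in the generic stabilizer
\[
S \;:=\; \{\,g \in G : \MR(gJ^2 \cap J^2) = \MR(J^2)\,\}.
\]
This $S$ is a definable normal subgroup of $G$ (normality comes from conjugation-invariance of $J^2$), and a standard argument (that $S$ stabilizes the generic type of $J^2$ and acts freely on its realizations, which are concentrated on $J^2$) bounds $\MR(S) \le \MR(J^2)$, forcing $\MR(S) = \MR(J^2)$. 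Since the nontrivial translations $J^2 \setminus \{1\}$ form a single $G$-conjugacy class (Section~\ref{sec:prelim}) and $J^2 \setminus S$ is a $G$-invariant subset of strictly smaller rank, we conclude $J^2 \subseteq S$; hence $\langle J^2 \rangle \subseteq S$, and by the rank equalities $J^2$ is generic in the connected subgroup $\langle J^2 \rangle$. Leveraging the disjoint block partition $J^2 \setminus \{1\} = \bigsqcup_\sigma (\Cen(\sigma) \setminus \{1\})$ (Lemma~\ref{lem:geometry}(d)) together with the non-split hypothesis, one upgrades this to $J^2 = \langle J^2 \rangle$, so $J^2$ is a subgroup; Neumann's theorem (Theorem~\ref{thm:neumann}) then forces $G$ to split, contradicting non-splitness.

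The main obstacle is this last upgrade in (d), from ``$J^2$ is generic in $\langle J^2 \rangle$'' to the set-theoretic equality ``$J^2 = \langle J^2 \rangle$''. The rank-theoretic arguments establish only that $J^2$ and $\langle J^2 \rangle$ coincide up to a set of strictly smaller Morley rank; closing that gap to an actual equality requires the block decomposition to interact correctly with the conjugacy-class structure of translations and the non-split hypothesis. The rest of the proof consists of comparatively routine rank bookkeeping and applications of Zilber indecomposability.
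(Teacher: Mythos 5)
Your parts (a) and (b) are fine: the decomposition $iJ=\bigcup_{j\neq i}\Cen(ij)$ together with connectedness of the centralizers of translations gives indecomposability (the paper argues instead via the $\Cen(i)$-action and transitivity, but your version works), and Zil'ber's theorem then gives (b). The structural problem is that you have inverted the paper's logical order: you reduce (c) to (d), whereas the paper proves (c) directly and deduces (d) from it. That inversion is legitimate only if your proof of (d) is self-contained, and it is not. Your argument for (d) ends at the step you yourself flag: passing from ``$J^2$ is generic in $\langle J^2\rangle$'' to the set-theoretic equality $J^2=\langle J^2\rangle$ so that Theorem~\ref{thm:neumann} applies. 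No mechanism is offered for this upgrade, and none is in sight: genericity of a conjugation-invariant subset of a connected group says nothing about the non-generic remainder, and the block partition of Lemma~\ref{lem:geometry}(d) does not force the missing elements of $\langle J^2\rangle$ to be translations. Since (c) in your scheme depends on (d), both parts are left unproved.

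The missing idea is the paper's fixed-point count, which contradicts genericity of $J^2$ in $\langle J^2\rangle$ directly rather than via Neumann's criterion. For fixed involutions $i\neq j$, the set $T_{ij}=\{(r,s)\in J\times J: i^{rs}=j\}$ has rank $n$ (choose $r$ freely, then $s$ is determined by Remark~\ref{lem:basic}), and its fibers under $(r,s)\mapsto rs$ have rank at most $k$, so there are rank at least $n-k$ many translations $\tau$ with $i^\tau=j$. Fixing $\sigma\in J^2\setminus\{1\}$ and letting $i$ range over $J$, the sets $\Sigma_i=\{\tau\in J^2: i^\tau=i^\sigma\}$ pairwise meet only in $\sigma$, so their union $\{\tau\in J^2:\sigma\tau^{-1}\text{ has a fixed point}\}$ has rank at least $n+(n-k)=\MR(J^2)$, i.e.\ is generic in $J^2$. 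Since nontrivial translations have no fixed points, $\sigma J^2\cap J^2$ is therefore non-generic in $J^2$ for every $\sigma$, which is incompatible with $J^2$ being generic in the connected group $\langle J^2\rangle$ (two generic subsets of a connected degree-one group intersect generically). This proves (c) outright, and (d) then follows in the direction opposite to yours: if $\MR(J^3)=\MR(J^2)$, then since $(iJ)^2=J^2$ and $(iJ)^3=iJ^3$ we get $\MR((iJ)^3)=\MR((iJ)^2)$, so the proof of Zil'ber's indecomposability theorem gives $\MR(\langle iJ\rangle)=\MR((iJ)^2)=\MR(J^2)$, making $J^2$ generic in $\langle J^2\rangle$ and contradicting (c). If you want to keep your architecture, you must replace the Neumann step by an argument of this kind; note also that your generic-stabilizer set $S$ needs $\Mdeg(J^2)=1$ even to be a subgroup, which you have not established.
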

\begin{proof}
	(a) Fix an involution $i \in J$. The set $iJ$ is normalized by $\Cen(i)$, hence it suffices to check indecomposability for $\Cen(i)$-normal subgroups. If $H \leq G$ is a $\Cen(i)$-normal subgroup of $G$, then either $\Cen(ij) \leq H$ for all $j \in J \setminus \{i\}$ or $H\  \cap\ \Cen(ij)$ has infinite index in $\Cen(ij)$ for all $j \in J \setminus \{i\}$. Therefore the set $iJ = \bigcup_{j \in J \setminus \{i\}} \Cen(ij)$ is indecomposable.
	
	(b)  Since $\langle J^2 \rangle = \langle iJ \rangle$, this follows from  Zil'ber's indecomposability theorem using (a).
	
	(c) Fix two involutions $i \neq j$. We claim that  
	\[\MR( \{ \tau \in J^2 : i^\tau = j \}) \geq n-k.\]
	To see this note that for any $r\in J$ by Remark~\ref{lem:basic} there is a unique $s\in J$ such that $i^{rs}=j$. Hence  the set $T_{ij}=\{( r, s)\colon i^{rs}=j\}\subset J\times J$  has Morley rank $n$. The equivalence classes on $T_{ij}$ given by $(r,s)\equiv (r',s')$ if and only if $rs=r's'$ have Morley rank at most~$k$. Hence the claim follows.

In particular, for any $\sigma \in J^2 \setminus \{1\}$ and $i\in J$ the set
 $\Sigma_i=\{ \tau \in J^2 : i^\sigma = i^\tau\}$ has Morley rank at least $n-k$. Since for $i\neq j\in J$
 the sets $\Sigma_i$ and $\Sigma_j$ intersect only in $\sigma$, it follows that
 $\{ \tau \in J^2 : \exists i \in J: i^\sigma = i^\tau\}$ has Morley rank (at least) $2n-k = \MR(J^2)$. 
  Hence for every $\sigma \in J^2 \setminus \{1\}$ the set
	\[ \{ \tau \in J^2 : \sigma\tau^{-1} \text{ has a fixed point} \} \]
	is a generic subset of $J^2$. Since translations do not have fixed points, it follows that 
	\[ \MR(\sigma J^2 \cap J^2 ) < 2n-k.\]
	 Thus, $J^2 $  is not generic  in $ \langle J^2 \rangle$. 
	 
(d)  Suppose $\MR( J^2 ) = \MR( J^3 )$. Since $(iJ)^2=J^iJ=J^2$ and $(iJ)^3=iJ^3$ we have $\MR( (iJ)^2 )= \MR( (iJ)^3 )$ and by (the proof of) Zil'ber's indecomposability theorem we get $\MR( \langle iJ \rangle ) = \MR( (iJ)^2)$. In particular, $J^2 \subseteq \langle iJ \rangle$ is a generic subset contradicting~(c).
\end{proof}

\begin{remark}\label{rem:Rips-Tent}
By Proposition~\ref{prop:subgroup generated by J^2} (b) it is easy to see that the non-split examples of sharply 2-transitive groups of characteristic $0$ constructed in \cite{rips-tent} do not have finite Morley rank.
\end{remark}

\begin{corollary}
	If the lines are strongly minimal, then $G$ is simple.
\end{corollary}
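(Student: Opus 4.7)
I would begin by observing that the strong minimality hypothesis forces $\langle J^2 \rangle = G$. Since a line has Morley rank $k = \MR(\Cen(ij))$ and (as noted before the corollary) Morley degree $1$, the strongly minimal hypothesis amounts to $k = 1$, and hence $\MR(J^2) = 2n - 1$. Proposition~\ref{prop:subgroup generated by J^2}(c) then gives $\MR(\langle J^2 \rangle) > \MR(J^2)$, so
\[\MR(\langle J^2 \rangle) \geq 2n = \MR(G).\]
By Proposition~\ref{prop:subgroup generated by J^2}(b), $\langle J^2 \rangle$ is a definable connected subgroup, and $G$ is connected of the same Morley rank $2n$ (it has Morley degree $1$), so I conclude $\langle J^2 \rangle = G$.

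Now let $1 \neq N \trianglelefteq G$. Since $G$ acts $2$-transitively on $J$, the normal subgroup $N$ acts transitively on $J$; in particular, for any fixed $i \in J$ there exists $n \in N$ with $n i n^{-1} \neq i$. Using $i^{-1} = i$, the commutator
\[[n,i] = n i n^{-1} i^{-1} = (n i n^{-1}) \cdot i\]
is a product of the two distinct involutions $nin^{-1}$ and $i$, and thus a nontrivial element of $J^2$; simultaneously $[n,i] = n \cdot (n^{-1})^{i} \in N$ by normality. Because $\chara(G) \neq 2$, the nontrivial translations form a single $G$-conjugacy class, so $N$ contains all of $J^2 \setminus \{1\}$, hence $J^2 \subseteq N$. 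Combining this with the first step, $N \supseteq \langle J^2 \rangle = G$, so $N = G$ and $G$ is simple.

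The main obstacle is the opening rank computation: one has to leverage Proposition~\ref{prop:subgroup generated by J^2}(c) together with the connectedness of $G$ to promote the strict inequality $\MR(\langle J^2 \rangle) > 2n - 1$ all the way to the equality $\langle J^2 \rangle = G$. Once this geometric input is in hand, the commutator identity and the single-conjugacy-class property of nontrivial translations finish the argument in a few lines.
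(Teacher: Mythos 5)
Your proof is correct and follows essentially the same route as the paper: first obtain a nontrivial translation in $N$ via a commutator with an involution and use that $J^2\setminus\{1\}$ is a single conjugacy class to get $J^2\subseteq N$, then use the genericity statements of Proposition~\ref{prop:subgroup generated by J^2} together with $k=1$ and the connectedness of $G$ to conclude $N=G$. The only cosmetic difference is that you invoke part (c) to show $\langle J^2\rangle=G$ directly, whereas the paper uses part (d) via $iJ^3\subseteq N$; these are interchangeable here.
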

\begin{proof}
	Let $N \neq 1$ be a normal subgroup of $G$. Fix an involution $i$ and an element $g \in N \setminus \Cen(i)$. Then $ii^g = (g^{-1})^ig \in N$ and therefore $N \cap J^2 \neq 1$. Since $J^2\setminus\{1\}$ is a conjugacy class, it follows that $J^2 \subseteq N$. If $k = 1$ then $N$ must be generic since $\MR(J^3) > \MR(J^2) = 2n-1$ and $iJ^3 \subseteq N$. Therefore $N = G$. 
\end{proof}

By Proposition 11.71 of \cite{borovik-nesin} we have the following inequality.

\begin{proposition}[Proposition 11.71 of \cite{borovik-nesin}]
	$0 < 2k < n$.
\end{proposition}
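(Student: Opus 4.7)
The proof splits into the bounds $k > 0$ and $2k < n$, which I would establish separately.

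For $k > 0$: assume for contradiction $k = 0$, so $\MR(J^2) = 2n - k = 2n = \MR(G)$. Since $G$ has Morley degree one, the set $J^2$ is then generic in $G$, hence also in its connected definable subgroup $\langle J^2\rangle$ provided by Proposition~\ref{prop:subgroup generated by J^2}(b). This directly contradicts Proposition~\ref{prop:subgroup generated by J^2}(c).

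For $2k < n$: I would first obtain the weaker bound $2k \leq n$ by a rank count on the geometry. The set of lines has Morley rank $2n - 2k$, being parameterized by ordered pairs of distinct points modulo the collinearity equivalence whose fibres have rank $2k$. The set of ordered pairs of distinct intersecting lines has Morley rank $3n - 2k$: one parameterizes by the (unique) intersection point (rank $n$) and then two distinct lines through that point, noting that the lines through a fixed involution $i$ form a family of rank $n - k$, since distinct $j, j' \neq i$ give the same line $\ell_{ij} = \ell_{ij'}$ iff $j' \in \ell_{ij}$. As the intersecting pairs embed into the set of ordered pairs of distinct lines, of rank $4n - 4k$, we deduce $3n - 2k \leq 4n - 4k$, i.e., $2k \leq n$.

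To rule out the equality $2k = n$, I would use Proposition~\ref{prop:no proj plane}. Under this assumption, intersecting pairs of lines are generic among all pairs, so for a generic line $\ell$ the family of lines meeting $\ell$ is generic in the set of all lines. Starting from a generic configuration of three pairwise intersecting lines and closing under the line operation $(p, q) \mapsto \ell_{pq}$, I would aim to produce a subset $X \subseteq J$ satisfying conditions (a) and (b) of Proposition~\ref{prop:no proj plane}. Such an $X$ would be forced to lie inside a single line, contradicting its containing three distinct lines.

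The main obstacle is ensuring that property (b) — every pair of lines in $X$ meets — actually survives the closure, rather than holding only generically. Pushing this through would likely combine the indecomposability of $iJ$ (Proposition~\ref{prop:subgroup generated by J^2}(a)) with Zil'ber's indecomposability theorem to show that, under the assumption $2k = n$, either the closure stabilizes into a proper projective plane or the accumulated orbit data forces $J^2$ to become generic in $\langle J^2 \rangle$, again contradicting Proposition~\ref{prop:subgroup generated by J^2}(c). An alternative route would go through the explicit coset structure $\ell_\sigma = i \cdot \Cen(\sigma)$ and the split normalizer $N_G(\Cen(\sigma)) = \Cen(\sigma) \rtimes N_{\Cen(s)}(\Cen(\sigma))$ of Proposition 11.51 of \cite{borovik-nesin}, whose rank must be $2k$; equality $2k = n$ would then make this normalizer large enough to exhibit a subgroup structure on $J^2$ and violate Theorem~\ref{thm:neumann}.
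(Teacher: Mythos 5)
Your argument for $k>0$ is correct and is a genuinely different (and slicker) route than the classical one: $k=0$ forces $\MR(J^2)=2n-k=2n=\MR(G)$, so $J^2$ is generic in the connected group $\langle J^2\rangle$, contradicting Proposition~\ref{prop:subgroup generated by J^2}(c). Your incidence count giving $2k\le n$ is also sound and is essentially the same double count of incident point--line pairs that underlies the paper's Lemma~\ref{line_counting} (it does quietly use that $G$ is non-split, so that $J$ is not a single line and a point lies on more than one line; this is also an implicit standing hypothesis of the cited Proposition 11.71 of \cite{borovik-nesin}).

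The gap is in the strict inequality $2k<n$, and you have named the obstruction yourself without overcoming it: Proposition~\ref{prop:no proj plane} needs \emph{every} pair of lines in $X$ to meet, and closing a generic configuration under $(p,q)\mapsto\ell_{pq}$ only ever yields generic intersection; the two fallback suggestions (indecomposability forcing $J^2$ generic, or the rank-$2k$ normalizer $N_G(\Cen(\sigma))$ ``exhibiting a subgroup structure on $J^2$'') are not worked out and do not obviously lead anywhere. The argument the paper relies on --- reproduced in generalized form as Lemma~\ref{line_counting}, applied with $H=G$ --- is of a different nature: if $2k=n$, the set $\Lambda$ of all lines has rank $2n-2k=2k$ and Morley degree $1$; by Lemma~\ref{line_lemma} the injective map $i\mapsto\lambda^i$ on $J\setminus\lambda$ produces rank-$2k$-many lines \emph{disjoint} from a fixed line $\lambda$, so disjointness from $\lambda$ is generic in $\Lambda$, while the incidence count produces a line meeting rank-$2k$-many lines, so intersection with that line is also generic in $\Lambda$ --- contradicting $\Mdeg(\Lambda)=1$. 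This use of the conjugation action $\lambda\mapsto\lambda^i$ is entirely absent from your proposal. Note finally that the statement you would actually need, Proposition~\ref{no-plane} on \emph{generic} projective planes (of which, under $2k=n$, the set $J$ itself would be an instance), is proved in the paper \emph{from} Lemma~\ref{line_counting}, so routing the present proposition through that machinery would be circular. As it stands you have proved $0<2k\le n$ but not $2k<n$.
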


Its proof uses a line counting argument. We will need a slightly more general version.

\begin{lemma} \label{line_counting}
	Let $H \leq G$ be a definable subgroup such that $\MR( H \cap J) = 2k$ and $\Mdeg( H \cap J )=1$. Then $\MR(\{ \lambda : \lambda \text{ is a line s.t. } \lambda \subseteq H \cap J \} ) < 2k$.
\end{lemma}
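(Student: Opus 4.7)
The plan is to establish $\MR(\Lambda)\le 2k$ by a line-counting argument, and then rule out equality using $\Mdeg(H\cap J)=1$ together with Proposition~\ref{prop:no proj plane}.

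For the upper bound I would fibre the definable incidence set
\[T=\{(\lambda,i,j):\lambda\in\Lambda,\ i\ne j,\ i,j\in\lambda\}\]
in two ways. Over $\lambda$, each line contributes an ordered pair of distinct points from a rank-$k$ set, giving $\MR(T)=\MR(\Lambda)+2k$. Over $(i,j)$, the line $\lambda=\ell_{ij}$ is uniquely determined, and $T$ maps into $(H\cap J)^2$, of rank $4k$. Hence $\MR(\Lambda)\le 2k$.

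For strict inequality, suppose for contradiction that $\MR(\Lambda)=2k$. Then the projection $T\to(H\cap J)^2$ has image of rank $4k$, hence generic in $(H\cap J)^2$ since $\Mdeg((H\cap J)^2)=1$; so for generic $(i,j)\in(H\cap J)^2$ the line $\ell_{ij}$ is contained in $H\cap J$. A parallel count of point-line incidences yields $\MR(\{\lambda\in\Lambda:i\in\lambda\})=k$ for generic $i\in H\cap J$, which implies further that generic pairs of lines in $\Lambda$ meet at a point of $H\cap J$.

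I would then follow the proof strategy of Proposition~\ref{prop:no proj plane} applied to $X=H\cap J$. For generic $\sigma,\tau\in X^2$, by the preceding step one has $\ell_\sigma,\ell_\tau\in\Lambda$ and the two lines meet at some $i\in X$; writing $\sigma=ai$ and $\tau=ib$ with $a,b\in X$ gives $\sigma\tau=ab\in X^2$, so $X^2$ is generically closed under multiplication. A fibre-rank computation shows $\MR(X^2)=3k$ (the fibre over generic $\sigma$ is $\ell_\sigma\cap X$, of rank $k$), and by Zil'ber's indecomposability theorem the subgroup $G_0=\langle X^2\rangle$ is a definable connected subgroup of $H$ of rank $3k$, normalised by every involution in $X$. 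The lemma preceding Proposition~\ref{prop:no proj plane} would then place $G_0$ inside some $\Cen(\sigma)$, forcing $\MR(G_0)\le k$ and contradicting $3k>k$ (as $k>0$).

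The principal obstacle is this last step: the structural lemma requires the subgroup to be contained in $J^2$ and to be uniquely $2$-divisible, whereas $G_0=\langle X^2\rangle$ only contains its generic subset $X^2\subseteq J^2\cup\{1\}$ with these properties, so the generated group need not consist entirely of translations. Closing this gap --- perhaps by working inside $G_0\cap J^2$ or by refining the generic closure argument so as to land directly inside a single centraliser $\Cen(\sigma)$ --- is the crux of the proof.
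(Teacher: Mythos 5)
Your upper bound $\MR(\Lambda)\le 2k$ and the derivation that, under the assumption $\MR(\Lambda)=2k$, a generic point of $H\cap J$ lies on rank-$k$ many lines of $\Lambda$ and hence generic pairs of lines in $\Lambda$ meet, are correct and coincide with the first half of the paper's argument. But the route you choose for the final contradiction does not close, and you say so yourself. The obstruction you flag is real, not cosmetic: in the borderline case $H=G$ (i.e.\ $n=2k$) one has $X^2=J^2$ of rank $3k$, and by Proposition~\ref{prop:subgroup generated by J^2}~(c) the group $\langle J^2\rangle$ has rank strictly greater than $3k$ and is certainly not contained in any $\Cen(\sigma)$; so neither the claim $\MR(\langle X^2\rangle)=3k$ (generic closure under multiplication plus Zil'ber does not give this --- $X^2$ is not shown to be indecomposable, nor of degree $1$, and a stabilizer-type argument would only control the generic part of $X^2$) nor the application of the uniquely-$2$-divisible-subgroup lemma (which requires the whole subgroup to sit inside $J^2$) can be made to work as stated. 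What you have is a program with its hardest step missing, and that step is where the content of the lemma lives.

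The paper finishes in one stroke from exactly the point you reached, using Lemma~\ref{line_lemma} instead of group theory: for a fixed $\lambda\in\Lambda$ the conjugates $\lambda^i$ with $i\in (H\cap J)\setminus\lambda$ all lie in $\Lambda$ (as $H$ is a subgroup), are pairwise distinct by Lemma~\ref{line_lemma}~(a), and are disjoint from $\lambda$ by Lemma~\ref{line_lemma}~(b); this is a rank-$2k$ family, so the set of lines of $\Lambda$ \emph{disjoint} from $\lambda$ is generic in $\Lambda$. Since $\Mdeg(\Lambda)=1$, this is incompatible with your (correct) conclusion that generic pairs of lines in $\Lambda$ meet. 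I would recommend replacing your entire last two paragraphs with this observation; as written, the proposal does not constitute a proof.
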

\begin{proof}
	This is proved in the same way as Proposition 11.71 of \cite{borovik-nesin}.
	Put $Z = H \cap J$ and let $\Lambda$ be the set of lines contained in $Z$. Since each $\lambda \in \Lambda$ has Morley rank 2k many preimages in $Z \times Z$, we have $\MR(\Lambda) \leq 2k$.
	Now assume $\MR(\Lambda) = 2k$. By the above argument we have $\Mdeg(\Lambda) = 1$ since $\Mdeg(Z) = 1$.
	
	Let $\lambda \in \Lambda$ be a line. By Lemma \ref{line_lemma} the family $( \lambda^i : i \in Z \setminus \lambda )$ consists of Morley rank 2k many lines which do not intersect $\lambda$. Hence the set $\{ \delta \in \Lambda : \lambda \cap \delta = \emptyset \} \subseteq \Lambda$ is a generic subset of $\Lambda$.
	
	We aim to find a line which intersects Morley rank 2k many lines contradicting $\Mdeg(\Lambda) = 1$. For $x \in Z$ set $\Lambda_x = \{ \lambda \in \Lambda : x \in \lambda \}$ and set $B(x) = \bigcup \Lambda_x \subseteq Z$. Note that
	$\MR(B(x)) = \MR(\Lambda_x)+ k $
	 and hence $\MR( \Lambda_x ) \leq k$ for all $x \in Z$.
	 Since each $\lambda \in \Lambda$ contains Morley rank $k$ many points, we must have $\MR(\Lambda_x) = k$ for a generic set of $x \in Z$.
	 
	 Fix $x_0 \in Z$ such that $\Lambda_{x_0}$ has Morley rank 2k. Then $B(x_0) \subseteq Z$ is generic and hence $\MR(\Lambda_x) = k$ for a generic set of $x \in B(x_0)$. Since $B(x_0) = \bigcup \Lambda_{x_0}$, we can find a line $\lambda \in \Lambda_{x_0}$ such that $\MR(\Lambda_x) = k$ for a generic set of $x \in \lambda$. But then $\lambda$ intersects Morley rank 2k many lines in $\Lambda$.
\end{proof}

To improve this rank inequality, we need to consider generalizations of projective planes. For definable sets $X$ and $Y$, we write $X \approx Y$ if and only if $\MR(X \Delta Y) < \MR(X)$.

\begin{definition}\label{def:generic proj plane}
	A definable subset $X \subseteq J$ is a \emph{generic projective plane} if
	\begin{enumerate}
		\item[(a)] $\MR(X) = 2k$ and $\Mdeg(X) = 1$, and
		\item[(b)] $\MR(\Lambda_X) = 2k$ and $\Mdeg(\Lambda) = 1$,
	\end{enumerate}
	where $\Lambda_X$ is the set of all lines $\lambda \subseteq J$ such that $\lambda \cap X \approx \lambda$.
\end{definition}

The next lemma follows from easy counting arguments.
\begin{lemma}
  Let $X \subseteq J$ be a definable set of Morley rank 2k and Morley degree 1. The following are equivalent:
  \begin{itemize}
    \item [(a)] $X$ is a generic projective plane,
    \item [(b)] $\MR(\Lambda_X) \geq 2k$,
    \item [(c)] $\MR( \{ \lambda \in \Lambda_X : x \in \lambda \}) = k$ for a generic set of $x \in X$.
  \end{itemize}
\end{lemma}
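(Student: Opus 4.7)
The plan is to combine one general inequality with the incidence variety $I = \{(x,\lambda) \in X \times \Lambda_X : x \in \lambda\}$. The general inequality is
\[ \MR(\Lambda_Y) \leq 2\MR(Y) - 2k \]
for any definable $Y \subseteq J$, obtained by sending $(i,j) \in Y^2 \setminus \Delta$ to $\ell_{ij}$: each $\lambda \in \Lambda_Y$ has fiber $(\lambda \cap Y)^2 \setminus \Delta$ of rank $2k$, since $\Mdeg(\lambda) = 1$ and $\lambda \cap Y \approx \lambda$ force $\MR(\lambda \cap Y) = k$.

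The implication (a) $\Rightarrow$ (b) is immediate. For (b) $\Rightarrow$ (a), the inequality applied to $Y = X$ forces $\MR(\Lambda_X) = 2k$; the preimage $\phi^{-1}(\Lambda_X)$ (with $\phi(i,j) = \ell_{ij}$) then has rank $4k$ in the Morley-degree-$1$ set $X^2 \setminus \Delta$, hence itself has Morley degree $1$, and multiplicativity of Morley degree along the rank-$2k$, degree-$1$ fibration $\phi$ yields $\Mdeg(\Lambda_X) = 1$. For (c) $\Rightarrow$ (b) I would compute $\MR(I)$ in two ways: projection to $X$ together with (c) gives $\MR(I) \geq 2k + k = 3k$, while projection to $\Lambda_X$ gives $\MR(I) = \MR(\Lambda_X) + k$ since each fiber $\lambda \cap X$ has rank $k$; together these force $\MR(\Lambda_X) \geq 2k$.

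The main step is (a) $\Rightarrow$ (c). Writing $r(x) = \MR(\{\lambda \in \Lambda_X : x \in \lambda\})$ and $X_r = \{x \in X : r(x) = r\}$, (a) gives $\max_r(\MR(X_r) + r) = \MR(I) = 3k$, while $\Mdeg(X) = 1$ ensures a unique $r_0$ with $\MR(X_{r_0}) = 2k$, so (c) amounts to $r_0 = k$. The case $r_0 > k$ contradicts $\MR(I) = 3k$ at once. The main obstacle is the case $r_0 < k$, in which the maximum $3k$ must be attained at some $r_* > k$ with $\MR(X_{r_*}) = 3k - r_* < 2k$. To rule this out I would restrict $I$ to $X_{r_*} \times \Lambda_X$ (which has rank $3k$) and project to $\Lambda_X$; since each fiber $\lambda \cap X_{r_*}$ has rank $\leq k$, a rank-$2k$ family of lines $\lambda \in \Lambda_X$ must satisfy $\MR(\lambda \cap X_{r_*}) = k$, which by $\Mdeg(\lambda) = 1$ means $\lambda \in \Lambda_{X_{r_*}}$. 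Hence $\MR(\Lambda_{X_{r_*}}) \geq 2k$, in direct contradiction with the general inequality $\MR(\Lambda_{X_{r_*}}) \leq 2\MR(X_{r_*}) - 2k < 2k$. Thus $r_0 = k$, which is (c).
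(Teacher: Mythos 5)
The paper gives no proof of this lemma at all --- it is dismissed with ``follows from easy counting arguments'' --- so there is no argument to compare against; your double counting of the incidence set $I=\{(x,\lambda): x\in\lambda\}$ is exactly the kind of argument intended, and it is correct. The one genuinely non-routine point is ruling out $r_0<k$ in (a)~$\Rightarrow$~(c), and your auxiliary inequality $\MR(\Lambda_Y)\le 2\MR(Y)-2k$ (from fibering $Y^2\setminus\Delta$ over the lines, each fiber $(\lambda\cap Y)^2\setminus\Delta$ having rank $2k$) disposes of it cleanly; the same fibration, together with multiplicativity of Morley degree along a map with fibers of constant rank and degree, also correctly yields $\Mdeg(\Lambda_X)=1$ in (b)~$\Rightarrow$~(a).
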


\begin{lemma} Assume $X \subseteq J$ is a generic projective plane and let $Z \subseteq J$ be a definable subset such that $X \approx Z$. Then $Z$ is a generic projective plane.
\end{lemma}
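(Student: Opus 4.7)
My plan is to verify the three conditions from the preceding characterization applied to $Z$: namely $\MR(Z) = 2k$, $\Mdeg(Z) = 1$, and $\MR(\Lambda_Z) \geq 2k$. The rank and degree of $Z$ are immediate. Set $D = X \Delta Z$; by hypothesis $\MR(D) < 2k$, so $X \cap Z = X \setminus D$ has rank $2k$ and Morley degree $1$ (using $\Mdeg(X)=1$), while $Z \setminus X \subseteq D$ has rank $< 2k$, giving $\MR(Z) = 2k$ and $\Mdeg(Z) = 1$.

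For the line count I would show that $\MR(\Lambda_X \setminus \Lambda_Z) < 2k$; combined with $\MR(\Lambda_X) = 2k$ and $\Mdeg(\Lambda_X) = 1$ this yields $\MR(\Lambda_X \cap \Lambda_Z) = 2k$, hence $\MR(\Lambda_Z) \geq 2k$. So let $\lambda \in \Lambda_X \setminus \Lambda_Z$. Then $\MR(\lambda \setminus X) < k$ while $\MR(\lambda \setminus Z) = k$. Since
\[ \lambda \setminus Z \subseteq \bigl((\lambda \cap X) \setminus Z\bigr) \cup (\lambda \setminus X), \]
and the second term on the right has rank $< k$, we must have $\MR((\lambda \cap X) \setminus Z) = k$; as $X \setminus Z \subseteq D$, this gives $\MR(\lambda \cap D) = k$. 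Thus $\Lambda_X \setminus \Lambda_Z$ is contained in the definable set $\Lambda_D := \{\lambda : \MR(\lambda \cap D) = k\}$.

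The heart of the argument is the bound $\MR(\Lambda_D) < 2k$. I would prove this via a double counting over pairs of points. Consider the definable set
\[ P = \{(x,y) \in D \times D : x \neq y \text{ and } \ell_{xy} \in \Lambda_D\}. \]
The projection $P \to \Lambda_D$ is surjective with fiber over $\lambda$ equal to $(\lambda \cap D)^2$ minus the diagonal, which has rank $2k$; hence $\MR(P) = \MR(\Lambda_D) + 2k$. On the other hand $P \subseteq D \times D$, so $\MR(P) \leq 2\MR(D) < 4k$. Combining, $\MR(\Lambda_D) \leq 2\MR(D) - 2k < 2k$, and the conclusion $\MR(\Lambda_Z) \geq 2k$ follows, whence $Z$ is a generic projective plane by the preceding characterization.

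The only subtle point is noticing that the correct double counting uses pairs $(x,y) \in D \times D$ rather than incidences $(x,\lambda)$: the incidence count, using that a generic point lies on a set of lines of rank $n-k$, would merely yield $\MR(\Lambda_D) \leq \MR(D) + (n-k) < n$, which is too weak. The pair-counting instead exploits the fact that each line in $\Lambda_D$ contributes a full two-dimensional slice of $D \times D$, and this tight enough to absorb any $D$ of rank strictly below the critical value $2k$.
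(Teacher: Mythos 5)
Your proof is correct, but it takes a genuinely different route from the paper's. The paper verifies condition (c) of the preceding equivalence lemma for $Z$: for a generic point $x \in X$ the rank-$k$ pencil $\Lambda_x$ of lines of $\Lambda_X$ through $x$ covers a set $B(x) = \bigcup \Lambda_x \approx X \approx Z$, and from $B(x) \approx Z$ one reads off that $\Lambda_x \cap \Lambda_Z$ has rank $k$; since this holds for a generic $x \in X \cap Z$, hence for a generic $x \in Z$, the local criterion applies. You instead verify condition (b) globally: you observe that every line of $\Lambda_X$ lost in passing to $\Lambda_Z$ must meet the symmetric difference $D = X \Delta Z$ in a full rank-$k$ set, and then bound the family $\Lambda_D$ of such lines by counting pairs of distinct points of $D$ on a common line, giving the explicit estimate $\MR(\Lambda_D) \leq 2\MR(D) - 2k < 2k$. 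Both arguments lean on the same equivalence lemma and on additivity of rank over definable fibers; yours avoids the intermediate claim $B(x) \approx X$ and yields a quantitative bound on how many lines can be destroyed by a perturbation of given rank, while the paper's is shorter because it localizes at a single generic point. Your closing remark is also accurate: the naive point--line incidence count over $D$ only gives $\MR(\Lambda_D) < n$, which is useless here, and the pair count is exactly what makes the bound close; the one hypothesis you are implicitly using for surjectivity of $P \to \Lambda_D$, namely that $\lambda \cap D$ is infinite for $\lambda \in \Lambda_D$, is guaranteed by $k \geq 1$ from the inequality $0 < 2k < n$.
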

\begin{proof}
	For $x \in X$ put $\Lambda_x = \{ \lambda \in \Lambda_X : x \in \lambda \}$. If $\MR( \Lambda_x ) = k$, then $B(x) = \bigcup \Lambda_x \approx X$. In particular, $B(x) \approx Z$ for a generic set of $x \in X \cap Z$. If $B(x) \approx Z$, then $\Lambda_x \cap \Lambda_Z$ must have Morley rank $k$. Hence it follows from the previous lemma, that $Z$ must be a generic projective plane.
\end{proof}

\begin{proposition} \label{no-plane}
	$G$ does not contain a generic projective plane $X \subseteq J$.
\end{proposition}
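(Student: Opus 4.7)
The plan is to adapt the multiplicative-closure argument of Proposition~\ref{prop:no proj plane} to the generic setting and then invoke Lemma~\ref{line_counting} for a contradiction. Suppose for contradiction that $X \subseteq J$ is a generic projective plane. Using the preceding lemma (that any $Z \approx X$ remains a generic projective plane), we may replace $X$ by $\bigcup_{\lambda \in \Lambda_X}\lambda$ so that $X$ is literally a union of lines from $\Lambda_X$. A routine rank count then yields $\ell_{ij} \in \Lambda_X$ for generic $i \neq j \in X$.

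The heart of the argument is to show that $X^2$ is generically closed under multiplication, mimicking Proposition~\ref{prop:no proj plane}. For generic $\sigma, \tau \in X^2$, the lines $\ell_\sigma$ and $\ell_\tau$ lie in $\Lambda_X$ and meet in a point $p$ (an incidence count shows that the lines in $\Lambda_X$ meeting a fixed generic line form a generic subset of $\Lambda_X$). Since $p$ is a generic point of $\ell_\sigma$ and $\ell_\sigma \cap X \approx \ell_\sigma$, we have $p \in X$. Writing $\sigma = ap$ and $\tau = pb$ with $a \in \ell_\sigma$, $b \in \ell_\tau$, both $a$ and $b$ are generic points of $\Lambda_X$-lines and hence lie in $X$; therefore $\sigma\tau = ap \cdot pb = ab \in X^2$.

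Zil'ber's indecomposability theorem, applied to the generically closed set $X^2$, then produces a definable connected subgroup $H := \langle X^2 \rangle$ in which $X^2$ is generic, with $\MR(H) = \MR(X^2) = 3k$. The rank $3k$ follows from the disjoint union $\bigsqcup_{\lambda \in \Lambda_X} \Cen(\sigma_\lambda) \subseteq X^2$ for the lower bound, and from the multiplication map $X \times X \to X^2$ whose generic fiber coincides with $\ell_\sigma \cap X$ of rank $k$ for the upper bound. Setting $\tilde H := \langle X \rangle = H \sqcup x_0 H$ for any fixed $x_0 \in X$ (using $x_0 X \subseteq X^2 \subseteq H$ and that $\langle X^2\rangle$ contains only even-length words in $X$), we get $\MR(\tilde H) = 3k$; moreover every $\lambda \in \Lambda_X$ satisfies $\lambda = i\,\Cen(\sigma_\lambda) \subseteq \tilde H \cap J$, with $i \in \lambda \cap X \subseteq \tilde H$ and $\Cen(\sigma_\lambda) \subseteq X^2 \subseteq H$.

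Granted $\MR(\tilde H \cap J) = 2k$ and $\Mdeg(\tilde H \cap J) = 1$, Lemma~\ref{line_counting} applied to $\tilde H$ bounds the rank of the set of lines contained in $\tilde H \cap J$ by strictly less than $2k$; since this set contains $\Lambda_X$, we obtain $\MR(\Lambda_X) < 2k$, contradicting Definition~\ref{def:generic proj plane}. The hard part is the upper bound $\MR(\tilde H \cap J) \leq 2k$: the lower bound is immediate from $X \subseteq \tilde H \cap J$, but the upper bound requires controlling the involutions in the coset $x_0 H$, which are precisely the elements $x_0 h$ with $x_0$ inverting $h \in H$. A careful analysis of the $x_0$-action on $H$, using that $\Cen(x_0) \cap J^2 = \{1\}$ and that a generic element of $H$ lies in $X^2 \subseteq J^2$ (hence is not fixed by $x_0$), should give the required bound.
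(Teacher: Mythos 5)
Your overall strategy --- building a definable subgroup out of $X$ from below and then contradicting Lemma~\ref{line_counting} --- differs from the paper's, which instead takes the almost-stabilizer $H = N_G^\approx(X) = \{ g \in G : X^g \approx X\}$, uses Lemma 4.3 of \cite{wagner} to replace $X$ by an $H$-invariant $Z \approx X$ with $Z \subseteq H$, and then uses the regular action of $J$ on $J$ (Remark~\ref{lem:basic}(b)) to force $Z$ to be generic in $H \cap J$. That route hands you the hypotheses of Lemma~\ref{line_counting} (rank $2k$ and degree $1$ of $H \cap J$, plus rank-$2k$ many lines inside it) essentially for free, which is exactly where your version runs into trouble.

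There are two genuine gaps. First, Zil'ber's indecomposability theorem does not deliver what you ask of it: it produces a definable connected subgroup generated by indecomposable sets, but it does not say that $X^2$ is generic in $\langle X^2 \rangle$ or that $\MR(\langle X^2\rangle) = \MR(X^2)$. To extract a subgroup of rank $3k$ from generic multiplicative closure of $X^2$ you need a stabilizer-type argument, which requires $\Mdeg(X^2) = 1$ (nowhere established) and a properly quantified form of ``generically closed''; your closure is only argued for generic pairs, with several unjustified genericity transfers along the way (e.g.\ that the intersection point $p = \ell_\sigma \cap \ell_\tau$ is generic on $\ell_\sigma$ over the parameters of $X$ and over $\sigma$, which is what you need to conclude $p \in X$ and $a = \sigma p \in X$). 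Second, as you yourself admit, the hypotheses of Lemma~\ref{line_counting} for $\tilde H$ are not verified: you need both $\MR(\tilde H \cap J) = 2k$ and $\Mdeg(\tilde H \cap J) = 1$, and you prove neither. The rank bound is in fact recoverable: $(\tilde H \cap J)^2 \subseteq H$, and the fibre of the multiplication map over a translation $\sigma$ lies in $\{ (a, a\sigma) : a \in \ell_\sigma \}$ and so has rank at most $k$, giving $2\,\MR(\tilde H \cap J) - k \leq \MR(H) = 3k$, i.e.\ $\MR(\tilde H \cap J) \leq 2k$. But the degree-one condition, which the proof of Lemma~\ref{line_counting} uses essentially, is not addressed at all and does not follow from anything you have set up. As written the argument is incomplete; the paper's detour through $N_G^\approx(X)$ is precisely what avoids these issues.
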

\begin{proof}
	Assume $X \subseteq J$ is a generic projective plane and
	put \[H = N_G^\approx(X) = \{ g \in G: X^g \approx X \}.\] By Lemma 4.3 of \cite{wagner} we can find $Z \subseteq J, Z \approx X$ such that $H \leq N_G(Z)$. 
	If $\Lambda_x = \{\lambda \in \Lambda_X : x \in \lambda \}$ has Morley rank k, then $\bigcup_{\lambda \in \Lambda_x}\lambda \approx X$ and hence $x \in H$.
	Thus $X \cap H \subseteq X$ is generic and therefore we may assume $Z \subseteq H$. Since $H$ normalizes $Z$, it follows from Lemma \ref{lem:basic} (b) that $Z$ must be generic in $H \cap J$. Hence we may assume $Z = H \cap J$.
	
	Note that if $\lambda$ is a line such that $\lambda \cap H \subseteq \lambda$ is generic, then $\lambda \subseteq H$. Each line contained in $H$ has rank 2k many preimages in $Z \times Z$. Since $X$ is a generic projective plane and $Z \approx X$, the previous lemma implies that $Z$ is a generic projective plane and hence the set of all lines in $H$ has rank 2k and degree 1.	
	This contradicts Lemma \ref{line_counting}.
\end{proof}

\begin{theorem}\label{thm:main}
	Set $l = \MR(J^3) - \MR(J^2) \geq 1$. Then $n > 2k+l$.
\end{theorem}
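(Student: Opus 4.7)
The plan is to argue by contradiction from $n \leq 2k+l$, constructing a generic projective plane $X \subseteq J$ and invoking Proposition~\ref{no-plane}.

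The starting point is a fiber-rank computation for the multiplication map $\mu \colon J \times J^2 \to J^3$, $(i,\sigma) \mapsto i\sigma$. Its source has Morley rank $n + (2n-k) = 3n-k$ and degree~$1$, while $\MR(J^3) = 2n-k+l$. Hence for generic $\tau \in J^3$ the fiber $\mu^{-1}(\tau)$ has rank $n - l$, and projecting to the first coordinate identifies it (via $i \mapsto (i, i\tau)$) with
\[
X_\tau := \{i \in J : i\tau \in J^2\}.
\]
So $\MR(X_\tau) = n-l$ and $\Mdeg(X_\tau) = 1$. Under the hypothesis $n \leq 2k+l$ one has $\MR(X_\tau) \leq 2k$, and one reduces to the critical case $\MR(X_\tau) = 2k$ by choosing $\tau$ in an appropriate rank-$(2k+l)$ locus in $J^3$.

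The heart of the argument, and the main obstacle, is to show that $X_\tau$ (or a closely related rank-$2k$ definable set) is a generic projective plane. The key algebraic identity is
\[
(i\tau)(i'\tau)^{-1} = ii' \in J^2 \qquad \text{for all } i, i' \in X_\tau,
\]
so the ``shift'' $S_\tau := X_\tau \cdot \tau \subseteq J^2$ satisfies $S_\tau S_\tau^{-1} \subseteq J^2$. Using the partition $J^2 \setminus \{1\} = \bigsqcup_{\sigma} (\Cen(\sigma) \setminus \{1\})$ from Lemma~\ref{lem:geometry}(d), this closure property forces the pairwise products $ii'$ for $i, i' \in X_\tau$ into a controlled family of centralizers, which one would use to parametrize a rank-$2k$ family of lines $\ell_\sigma$ approximately contained in $X_\tau$.

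The delicate point is rigorously producing this line family with the correct rank and Morley degree~$1$. The naive ``line through two points of $X_\tau$'' argument fails because three generic points of $X_\tau$ need not be collinear, so pairs in $X_\tau^2$ do not directly yield lines inside $X_\tau$. Instead, one should argue that lines $\ell_\sigma$ for $\sigma$ in the rank-$2k$ locus cut out by the closure of $S_\tau$ under ratios satisfy $\ell_\sigma \cap X_\tau \approx \ell_\sigma$, by a rank-fiber computation together with Zil'ber-indecomposability-style arguments in the spirit of Proposition~\ref{prop:subgroup generated by J^2}. Once this rank-$2k$ family of approximately-contained lines is produced, Proposition~\ref{no-plane} delivers the contradiction.
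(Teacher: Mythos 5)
Your overall architecture is the same as the paper's: consider $X_\alpha=\{i\in J: i\alpha\in J^2\}$ for a suitable $\alpha\in J^3\setminus J$, bound $\MR(X_\alpha)\leq n-l$ by a fiber computation for the multiplication map, and rule out the boundary case by producing a generic projective plane and invoking Proposition~\ref{no-plane}. But there is a genuine gap at exactly the point you flag as ``the delicate point'': you never produce the required rank-$2k$ family of lines, and the identity you offer as the key algebraic input, $(i\tau)(i'\tau)^{-1}=ii'\in J^2$, carries essentially no information about $X_\tau$ --- it holds for arbitrary involutions $i,i'$ and arbitrary $\tau$, so it cannot isolate any ``controlled family of centralizers'' attached to $X_\tau$, and the subsequent appeal to indecomposability-style arguments is not an argument. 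Moreover, your reduction to ``the critical case $\MR(X_\tau)=2k$'' is unsupported: from the contradiction hypothesis $n\leq 2k+l$ you only get $\MR(X_\tau)\leq 2k$, and if the rank were strictly smaller your scheme produces no contradiction at all. What is needed is an unconditional lower bound $\MR(X_\alpha)\geq 2k$ together with a reason why equality yields a generic projective plane.

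Both follow from a single observation that your proposal is missing and that is the real heart of the proof: $X_\alpha$ is a union of lines. If $irs=\alpha$ and $v\in\ell_{rs}$, then $rs=vu$ for some $u\in J$, and $u(rs)=v^u\in J$ shows $u\in\ell_{rs}$; hence $\alpha=ivu$. For any $p\in\ell_{iv}$ the same argument gives $iv=pq$ with $q\in\ell_{iv}$, so $p(qu)=\alpha$ and $p\in X_\alpha$. Thus the entire line $\ell_{iv}$ lies in $X_\alpha$, and as $v$ ranges over $\ell_{rs}$ (note $i\notin\ell_{rs}$ because $\alpha\notin J$) one obtains Morley rank $k$ many pairwise distinct lines through $i$ contained in $X_\alpha$. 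This gives $\MR(X_\alpha)\geq 2k$ with no hypothesis on $n$, and in the case $\MR(X_\alpha)=2k$ it exhibits a degree-one component of $X_\alpha$ as a generic projective plane in the sense of Definition~\ref{def:generic proj plane}, contradicting Proposition~\ref{no-plane}. One then concludes $2k<\MR(X_\alpha)\leq n-l$ directly, so $n>2k+l$ with no global contradiction hypothesis needed. Without this union-of-lines fact, your argument does not close.
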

\begin{proof}
	Consider the multiplication map $\mu: J \times J \times J \rightarrow J^3$. For $\alpha \in J^3$ we set $X_\alpha$ to be the set
	\[ X_\alpha = \{ i \in J : \exists r,s \in J \; irs = \alpha \}. \]
	Equivalently, $X_\alpha = \{i \in J :  i\alpha \in J^2 \}$ is the set of all involutions $i$ such that $i\alpha$ is a translation.
	
	Since $\MR(J^3) = 2n-k+l$ there must be some $\alpha \in J^3 \setminus J$ such that $\mu^{-1}(\alpha) \leq n+k-l$. Set $X = X_\alpha$ for such an $\alpha \in J^3 \setminus J$. If $irs = \alpha$, then $\MR(\{ j \in J: rs \in jJ\}) = k$ and hence $\MR(\mu^{-1}(\alpha))= \MR(X) + k$. Therefore we have $\MR(X) \leq n-l$.
	
	We now aim to show that $2k < \MR(X)$. If $irs = \alpha$ and $v \in \ell_{rs}$, then $\ell_{iv} \subseteq X$: We have $irs = ivu$ for some $u \in \ell_{rs}$ and moreover for each $p \in \ell_{iv}$ there is some $q \in \ell_{iv}$ such that $pq = iv$ and hence $pqu = ivu = irs = \alpha$. Hence each point in $X$ is contained in Morley rank $k$ many lines which are contained in $X$. Hence $X$ must have Morley rank at least $2k$.
	
	Now assume $\MR(X) = 2k$ and set $m = \Mdeg(X)$. Let $\Lambda$ be the set of lines obtained as above. For each $x \in X$ the set $\{ \lambda \in \Lambda : x \in \lambda\}$ has Morley rank $k$ and Morley degree $1$.
	Write $X$ as the disjoint union of definable sets $X_1, \dots X_m$, each of Morley rank $2k$ and Morley degree $1$.
	For $a = 1, \dots m$ let $\Lambda_a \subseteq \Lambda$ be the set $\Lambda_a = \{ \lambda \in \Lambda : \lambda \cap X_a \approx \lambda \}$. Since each $x \in X$ is contained in Morley rank $k$ many lines in $\Lambda$ and each line in $\Lambda$ contains Morley rank $k$ many points, the set $\Lambda$ must have Morley rank $2k$. Hence there is $b$ such that $\Lambda_b$ has Morley rank $2k$. Now $X_b$ is a generic projective plane. This contradicts Proposition \ref{no-plane}.
	
	Therefore $2k < \MR(X) \leq n-l$ and hence $2k+l < n$.
\end{proof}

The previous theorem  implies:

\begin{corollary}
	If $G$ is a sharply 2-transitive group, \ $\MR(G) = 6$, then $G$ is of the form $\mathrm{AGL}_1(K)$ for some algebraically closed field $K$ of Morley rank $3$.
\end{corollary}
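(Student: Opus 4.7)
The plan is to combine Theorem~\ref{thm:main} with Proposition~11.71 of \cite{borovik-nesin} to force $G$ to be split when $\MR(G)=6$, and then to invoke the results of Alt\i nel, Berkman and Wagner to upgrade the resulting near-field to an algebraically closed field.

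I would first dispose of characteristic $2$: by \cite{altinel-berkman-wagner}, any infinite sharply 2-transitive group of finite Morley rank in characteristic $2$ splits, and the corresponding identification of $G$ with $\mathrm{AGL}_1(K)$ for an algebraically closed field $K$ of rank $3$ follows from the rank equation $\MR(G) = 2\MR(K)$. So I may assume $\chara(G)\neq 2$, which puts us in the setting of Section~\ref{sec:finiteMR} with the point-line geometry of Section~\ref{sec:geometry} available, and with $n := \MR(J)$ satisfying $2n = \MR(G) = 6$, hence $n = 3$.

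Now suppose toward contradiction that $G$ is non-split. By Proposition~11.71 of \cite{borovik-nesin}, $0 < 2k < n = 3$, which leaves only $k = 1$. By Proposition~\ref{prop:subgroup generated by J^2}(d), $l := \MR(J^3) - \MR(J^2) \geq 1$. Feeding these into Theorem~\ref{thm:main} gives
\[
  3 \;=\; n \;>\; 2k + l \;\geq\; 2 + 1 \;=\; 3,
\]
a contradiction. Hence $G$ splits, and a further appeal to \cite{altinel-berkman-wagner} in characteristic $\neq 2$ identifies $G$ with $\mathrm{AGL}_1(K)$ for some algebraically closed field $K$ of Morley rank $3$.

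There is no genuine obstacle here: all of the real work is already packaged inside Theorem~\ref{thm:main}, and the corollary is just the observation that $n=3$ forces $k$ and $l$ into the unique combination ruled out by that theorem. The only care required is to correctly cite \cite{altinel-berkman-wagner} twice, once for the characteristic~$2$ reduction and once for the passage from a near-field to an algebraically closed field in the split case.
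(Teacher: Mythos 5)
Your characteristic $\neq 2$ argument is exactly the content of the paper's one-line appeal to Theorem~\ref{thm:main}: assuming non-splitness, Proposition~11.71 gives $k=1$, Proposition~\ref{prop:subgroup generated by J^2}(d) gives $l\geq 1$, and $3=n>2k+l\geq 3$ is the desired contradiction; the split case in characteristic $\neq 2$ is then handled by the second half of the Alt\i nel--Berkman--Wagner result. That part is fine.

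The gap is in your characteristic $2$ case. The result of \cite{altinel-berkman-wagner}, as used in this paper, gives the identification with $\mathrm{AGL}_1(K)$ for an algebraically closed field $K$ only in characteristic \emph{different from} $2$; in characteristic $2$ it gives only that $G$ splits, i.e.\ $G\cong N\rtimes G_x$ with $N$ abelian, equivalently $G\cong K\rtimes K^*$ for some \emph{near-field} $K$. Your claim that ``the identification of $G$ with $\mathrm{AGL}_1(K)$ for an algebraically closed field $K$ of rank $3$ follows from the rank equation $\MR(G)=2\MR(K)$'' does no work: the rank equation only tells you $\MR(N)=3$, and says nothing about why the point stabilizer acts like the multiplicative group of an algebraically closed field. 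The paper closes this gap with a genuinely nontrivial input: the point stabilizer has Morley rank $3$ and contains no involutions (since in characteristic $2$ involutions have no fixed points), hence is solvable by Fr\'econ's theorem that there is no bad group of Morley rank $3$; only then does Corollary~11.66 of \cite{borovik-nesin} produce the algebraically closed field. Without the solvability of the rank-$3$ point stabilizer --- which is where Fr\'econ's theorem is indispensable --- you cannot rule out that $K$ is merely a near-field, and your proof of the corollary is incomplete in characteristic $2$.
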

\begin{proof}
If $\chara(G)\neq 2$, this follows from Theorem~\ref{thm:main}. If $\chara(G)=2$, then $G$ is split by~\cite{altinel-berkman-wagner} and any point stabilizer has Morley rank 3. Since the point stabilizers do not contain involutions, they are solvable by~\cite{frecon}. Now the result follows from~\cite{borovik-nesin}, Cor.~11.66. 
\end{proof}

\bibliographystyle{plain}
\bibliography{bibliography}

\end{document}